\newtheorem{prop}{Proposition}[section]
\newtheorem{theo}[prop]{Theorem}
\newtheorem{cor}[prop]{Corollary}
\newtheorem{rem}[prop]{Remark}
\newtheorem{lem}[prop]{Lemma}
 \newcommand{\bx}{\mbox{\boldmath $x$}}
\newcommand{\bv}{\mbox{\boldmath $v$}}
\newcommand{\bu}{\mbox{\boldmath $u$}}
\newcommand{\bN}{\mbox{\boldmath $N$}}
\newcommand{\R}{\mbox{\boldmath $\mathbb{R}$}}
\newenvironment{proof}
{\begin{trivlist} \item[\hskip \labelsep {\bf Proof}\hspace*{3 mm}]}
	{\hfill$\Box$\end{trivlist}}
\newenvironment{acknow}
{\begin{trivlist} \item[\hskip \labelsep {\bf Acknowledgments.}]}
	{\end{trivlist}}
\date{}
\begin{document}

\title{Möbius inversion of surfaces in the Minkowski 3-space}
\author{Marco Ant\^onio do Couto Fernandes}

\maketitle
\begin{abstract}
We define and present some proprieties of the Möbius inversion of surfaces in the Minkowski 3-space. We prove that the Möbius inversion preserves the lines of principal curvature and the locus of points where the metric is degenerate, but it does not preserve the parabolic set. For ovaloids, we show that it is possible to translate the surface so that the inversion remains an ovaloid.

\end{abstract}

\renewcommand{\thefootnote}{\fnsymbol{footnote}}
\footnote[0]{2020 Mathematics Subject classification:
	53A35 
	53A05 
	53C50 
	52A15 
}
\footnote[0]{Key Words and Phrases. Möbius Inversion, Minkowski space, Surfaces, Curvature Lines.}

\section{Introduction}\label{sec:intro}

The Möbius inversion of a closed subset $F$ in the Euclidean space $\R^3$ is given by
$$i_E(F)= cl \left \{ \frac{p}{\langle p,p \rangle_E}  : p \in F \setminus \{0\} \right \},$$
where $cl$ denotes the topological closure in $\R^3$ endowed with the topology induced by the Euclidean inner product $\langle \cdot,\cdot\rangle_E$. Geometrically, points on the unit Euclidean sphere centered at the origin are fixed points of $i_E$. Moreover, the image of points inside this sphere are points outside it and vice-versa.

The Möbius inversion preserves the lines of principal curvature on surfaces in $\R^3$. Ghomi and Howard \cite{GhomiHoward} use this fact to relate the lines of principal curvature of a closed and convex surface with one umbilic point removed to the lines of principal curvature of the graph of an asymptotically constant function. That lead to give the following new formulation of the Carathéodory Conjecture: the graph of an asymptotically constant function $f:\R^2 \to \R$ has at least one umbilic point.

In this paper, we define the Möbius inversion of surfaces in the Minkowski 3-space $\R^3_1$ and show that it preserves their lines of principal curvature. Furthermore, the Möbius inversion preserves the set of points where the metric is degenerate and the discriminant of the lines of principal curvature.

We show that the parabolic set is not preserved by the Möbius inversion by means of a counterexample. However, when we work with ovaloids, which are closed surfaces with empty parabolic sets, we obtain conditions for the Möbius inversion of these surfaces to remain ovaloids.

The paper is organized as follows. In Section \ref{sec:prel}, we present the Minkowski 3-space $\R^3_1$ and some concepts of the differential geometry of surfaces in $\R^3_1$. In Section \ref{sec:mobius_R31}, we define the Möbius inversion in $\R^3_1$ and give some of its properties. Finally, in Section \ref{sec:parabolic}, we show that the parabolic set is not always preserved by Möbius inversions, but for ovaloids it is possible to impose conditions in order to preserve the emptiness of the parabolic set.


\section{Preliminaries} \label{sec:prel}

The {\it Minkowski 3-space} $(\mathbb{R}_1^{3},\langle ,\rangle )$ is the vector space $\mathbb{R}^{3}$ endowed with the Minkowski metric induced by the pseudo-scalar product $ \langle \bu, \bv\rangle =u_0v_0+u_1v_1-u_2v_2$, for any vectors $\bu=(u_0,u_1,u_2)$ and $\bv =(v_0,v_1, v_2)$ in $\mathbb{R}^{3}$. A non-zero vector $\bu\in \mathbb R^3_1$ is said to be {\it spacelike} if $\langle \bu, \bu\rangle>0$, {\it lightlike} if $\langle \bu, \bu\rangle=0$ and {\it timelike} if $\langle \bu, \bu\rangle<0$. The norm of a vector $\bu\in \mathbb{R}_1^{3}$ is defined by
$\Vert \bu \Vert=\sqrt{ |\langle \bu, \bu\rangle|}.$

If $p$ is a point in $\R^3$, then $\langle p,p \rangle$ denotes $\langle \vec{0p},\vec{0p} \rangle$, where $\vec{0p}$ is the vector from the origin to $p$. The set of points $p \in \R^3_1$ with $\langle p,p \rangle = 0$ is called the \textit{light cone} and is denoted by $LC$. The \textit{de Sitter sphere}, denoted by $\mathbb{S}^2_1$, is the set of points $ q \in \R^3_1$ such that $\langle q, q \rangle = 1$. On the other hand, the \textit{hyperbolic plane}, denoted by $\mathbb{H}^2$, is the set of the points $q \in \R^3_1$ where $\langle q, q \rangle = -1$.

Let $S$ be a smooth and regular surface in $\mathbb R^3_1$ and let 
$\bx:U\subset \mathbb R^2\to \mathbb{R}_1^{3}$ be a local parametrization of $S$.
The concepts of differential geometry in $\R^3$ can be extended to $\R^3_1$, see for example \cite{Couto_Lymb}. The pseudo inner product $\langle \cdot,\cdot \rangle$ in $\R^3_1$ induces a pseudo metric on $S$ called the first fundamental form. Let $E=\langle {\bx}_u,{\bx}_u \rangle$, $F=\langle{\bx}_u,{\bx}_v\rangle$ and $G=\langle {\bx}_v,{\bx}_v\rangle$, where subscripts denote partial derivatives. The induced (pseudo) metric on $S$ is Lorentzian (resp. Riemannian, degenerate) at $p=\bx(u,v)$ if and only if $\delta(u,v) = (F^2-EG)(u,v)>0$ (resp. $<0$, $=0$). The locus of points on the surface where the metric is degenerate is called the {\it locus of degeneracy} and is denoted by $LD$. We identify the $LD$ on $S$ with its pre-image in $U$ by $\bx$.

At $p \in S\setminus LD$, we have a well defined unit normal vector (the Gauss map) $\bN=\bx_u\times\bx_v/||\bx_u\times\bx_v||$,
which is timelike (resp. spacelike) if $p$ is in the Riemannian (resp. Lorentzian) region of $S$. The self-adjoint operator $A_p=-d\bN_p:T_pS\to T_pS$ on $S\setminus LD$ defines a quadratic form $II_p(\bv) = \langle A_p(\bv),\bv \rangle$ on $T_pS$ called the second fundamental form. We denote by $l=\langle \bN,\bx_{uu}\rangle$, $m=\langle \bN,\bx_{uv}\rangle$ and $n=\langle \bN,\bx_{vv}\rangle$ the coefficients of the second fundamental form on $S \setminus LD$.

When $A_p$ has real eigenvalues $\kappa_1$ and $\kappa_2$, we call them the {\it principal curvatures} and their associated eigenvectors the {\it principal directions}
of $S$ at $p$. There are always two principal curvatures at each point on the Riemannian part of $S$, but this is not always the case on its Lorentzian part.
A point $p$ on $S$ is called an {\it umbilic point} if  $\kappa_1=\kappa_2$ at $p$ (i.e., if $A_p$ is a multiple of the identity map).

The \textit{lines of principal curvature} on $S$, which are the integral curves of the principal directions, are the solutions of the binary differential equation (BDE)
\begin{equation}\label{eq:principalBDE}
	(Fn-Gm)dv^2+(En-Gl)dvdu+(Em-Fl)du^2=0.
\end{equation}
The discriminant of the BDE (\ref{eq:principalBDE}) consists of the umbilic points in the Riemannian region of $S$ and is the locus of points in the Lorentzian region where two principal directions coincide and become lightlike. In the last case, it is labelled {\it Lightlike Principal Locus} $(LPL)$ in \cite{Izu-Machan-Farid}. 

The \textit{Gaussian curvature} of $S$ at $p$ is given by $K(p) = \det (A_p)$, that is,
$$K(p) = \frac{ln-m^2}{EG-F^2}(p).$$
The locus of points where the Gaussian curvature is zero is called the \textit{parabolic set}.

One can extend the lines of principal curvature, the $LPL$ and the parabolic set across the $LD$ as follows (\cite{IzumiyaTari}).
As equation (\ref{eq:principalBDE}) is homogeneous in $l,m,n$,
we can multiply these coefficients by $||\bx_u\times\bx_v||$ and substitute them in equation (\ref{eq:principalBDE}) by
\[
\bar{l}=\langle \bx_u\times\bx_v,\bx_{uu}\rangle, \quad
\bar{m}=\langle \bx_u\times\bx_v,\bx_{uv}\rangle,\quad
\bar{n}=\langle \bx_u\times\bx_v,\bx_{vv}\rangle.
\]
The new equation 
\begin{equation}\label{eq:principalLD}
	(G\bar{m}-F\bar{n})dv^2+(G\bar{l}-E\bar{n})dudv+(F\bar l-E\bar{m})du^2=0
\end{equation}
is defined at points on the $LD$ and
its solutions are the same as those of equation (\ref{eq:principalBDE}) in the Riemannian and Lorentzian regions of $S$. The discriminant of the BDE $(\ref{eq:principalLD})$ extends the $LPL$ to points on the $LD$. Thus, a point $p = \bx(u,v)$ belongs to the $LPL$ of $S$ when
$$ \tilde \delta(u,v) = \left((E\bar n-G\bar l)^2-4(F\bar n-G\bar m)(E\bar m-F\bar l)\right)(u,v)=0. $$
For the parabolic set, note that $K = 0$ if and only if $\bar l \bar n-\bar{m}^2=0$. Thus, the parabolic set also extends to points on the $LD$ as  the zero set of the function $\bar K=\bar l \bar n-\bar{m}^2$.

\medskip

\section{Möbius inversion in $\R^3_1$}\label{sec:mobius_R31}

We define the \textit{Möbius inversion} of a closed subset $F \subset \R^3_1$ as
$$i_M(F) = cl \left \{ \frac{p}{\langle p , p \rangle} : p \in F \setminus LC \right \},$$
where $cl$ denotes the topological closure in $\R^3$ with the same topology used to define $i_E$. When necessary, we consider the Möbius inversion as a function $i_M: \R^3_1 \setminus LC \to \R^3_1 \setminus LC$ given by $i_M(p) = p/\langle p,p \rangle$. Consider the following regions of $\R^3_1$:
\begin{equation}\label{eq:Ri}
\begin{array}{l}
R_1 = \{ (x,y,z) \in \R^3_1 : x^2+y^2-z^2>0 \},\\ R_2 = \{ (x,y,z) \in \R^3_1 : x^2+y^2-z^2<0, \, z>0\},\\
R_3 = \{ (x,y,z) \in \R^3_1 : x^2+y^2-z^2<0, \, z<0\}.
\end{array}
\end{equation}
Then $i_M$ is a diffeomorphism from $R_1$ to $R_1$ and from $R_2$ to $R_3$ with $i_M^{-1}=i_M$. Points on the hyperbolic plane $\mathbb{H}^2$ and on the de Sitter sphere $\mathbb{S}^2_1$ are fixed points of $i_M$. In $R_1$, $i_M$ is an inversion with respect to $\mathbb{H}^2$, Figure \ref{fig:mobius_inv_R31} left. On the other hand, in $R_2$ and $R_3$, $i_M$ is an inversion with respect to the $\mathbb{S}^2_1$, Figure \ref{fig:mobius_inv_R31} center and right.
\begin{figure} [h!]
	\centering
	\includegraphics[scale=0.4]{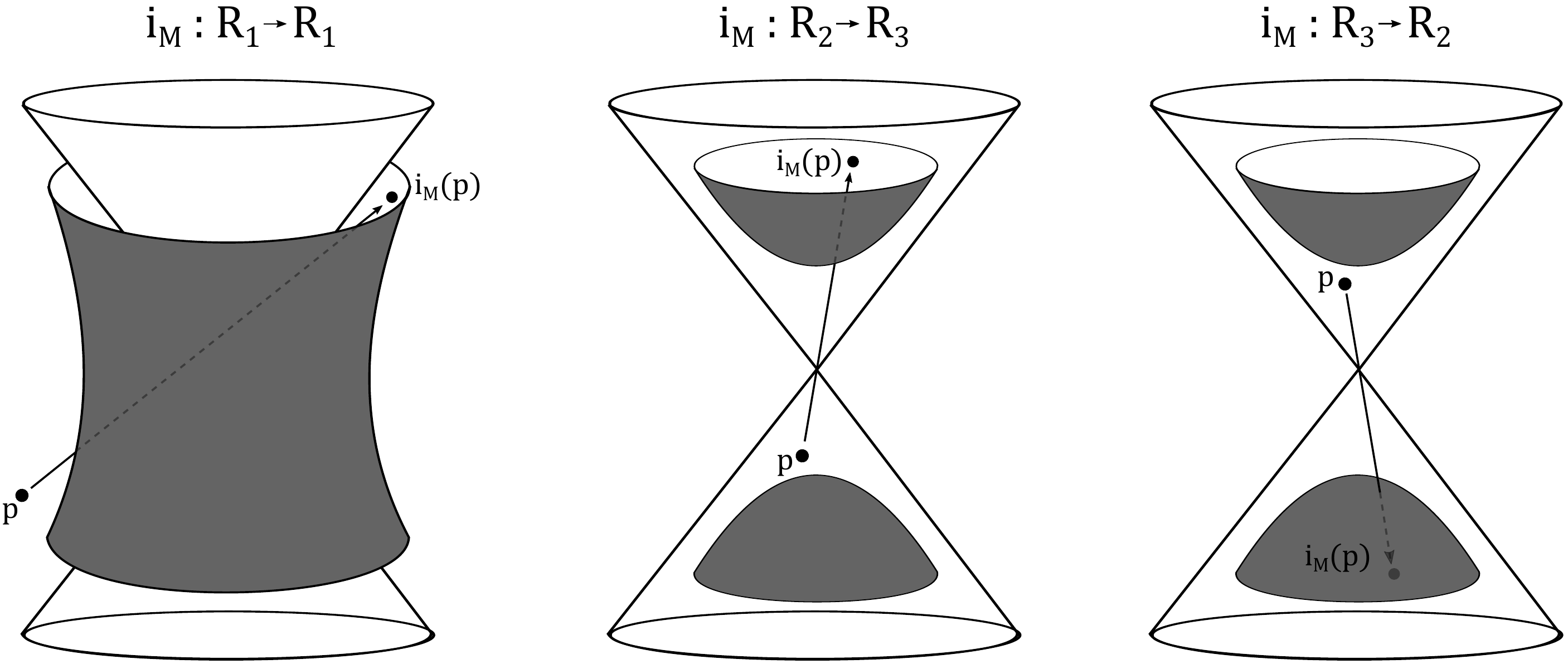}
	\caption{Möbius inversion in Minkowski space.}
	\label{fig:mobius_inv_R31}
\end{figure}

\begin{theo}\label{prop:inv_compac}
	Let $S$ be a compact surface in $\R^3_1$. Then following properties hold:
	\begin{enumerate}
		\item $i_M(S) = \left \{ \frac{p}{\langle p, p \rangle}  : p \in S \setminus LC \right \}$;
		\item $i_M(S) \cap LC = \emptyset$;
		\item $i_M(S)$ is bounded if and only if $S \cap LC = \emptyset$.
	\end{enumerate}
\end{theo}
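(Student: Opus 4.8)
The plan is to deduce all three items from a single estimate on how the Euclidean norm of $i_M(p)$ behaves as $p$ approaches $LC$, and then to exploit the compactness of $S$. Throughout, let $\|p\|_E=\sqrt{\langle p,p\rangle_E}$ denote the Euclidean norm inducing the topology used in the closure. The starting point is the identity $\|i_M(p)\|_E=\|p\|_E/|\langle p,p\rangle|$, valid for $p\in\R^3_1\setminus LC$, together with the elementary bound $|\langle p,p\rangle|\le\|p\|_E^2$, which holds because $|x^2+y^2-z^2|\le x^2+y^2+z^2$. These combine to give $\|i_M(p)\|_E\ge 1/\|p\|_E$. From here I would record the crucial limiting behaviour: if $p_n\in S\setminus LC$ and $p_n\to p_0\in LC$, then $\|i_M(p_n)\|_E\to\infty$. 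Indeed, if $p_0\neq 0$ this follows from the identity, since the numerator converges to $\|p_0\|_E>0$ while $|\langle p_n,p_n\rangle|\to|\langle p_0,p_0\rangle|=0$; and if $p_0=0$ it follows from the lower bound $\|i_M(p_n)\|_E\ge 1/\|p_n\|_E\to\infty$.

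Item (1) is then a compactness argument. Writing $A=\{p/\langle p,p\rangle:p\in S\setminus LC\}$, it suffices to show that $A$ is closed. Given $q\in cl(A)$, choose $p_n\in S\setminus LC$ with $i_M(p_n)\to q$; by compactness of $S$ a subsequence satisfies $p_{n_k}\to p_0\in S$. Were $p_0\in LC$, the limiting behaviour above would force $\|i_M(p_{n_k})\|_E\to\infty$, contradicting convergence to the finite point $q$. Hence $p_0\in S\setminus LC$, and continuity of $i_M$ on $\R^3_1\setminus LC$ yields $q=i_M(p_0)\in A$; thus $cl(A)=A$. Item (2) is immediate once (1) identifies $i_M(S)$ with $A$: for $p\in S\setminus LC$ one computes $\langle i_M(p),i_M(p)\rangle=\langle p,p\rangle/\langle p,p\rangle^2=1/\langle p,p\rangle\neq 0$, so no point of $i_M(S)$ lies on $LC$.

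For item (3), the implication $S\cap LC=\emptyset\Rightarrow i_M(S)$ bounded is another compactness argument: on the compact set $S$ the continuous function $p\mapsto|\langle p,p\rangle|$ is then strictly positive, hence bounded below by some $m>0$, while $\|p\|_E\le M$ for some $M$; consequently $\|i_M(p)\|_E=\|p\|_E/|\langle p,p\rangle|\le M/m$. For the converse I would argue contrapositively: assuming $S\cap LC\neq\emptyset$, I would select a point of $S\cap LC$ that is a limit of a sequence $p_n\in S\setminus LC$, whence $\|i_M(p_n)\|_E\to\infty$ shows that $i_M(S)$ is unbounded.

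The step I expect to require the most care is producing this approximating sequence, i.e. exhibiting a point of $S\cap LC$ that is a limit of points of $S\setminus LC$. For smooth (non-analytic) surfaces $S\cap LC$ may well have nonempty interior in $S$, so one cannot simply use an arbitrary intersection point; instead I would take a point $p_1$ in the boundary of $S\cap LC$ relative to $S$, which is approachable from $S\setminus LC$ by construction. Such a boundary point exists because $S\cap LC$ is a nonempty proper closed subset of the component of $S$ containing a given intersection point: no component of a compact regular surface without boundary can lie entirely in $LC$, since each smooth nappe of $LC\setminus\{0\}$ is a non-compact cylinder and $LC$ fails to be a regular surface at the vertex. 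With this technical point settled, all three items follow from the norm estimate and the compactness of $S$.
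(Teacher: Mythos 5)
Your proof is correct and follows essentially the same route as the paper's: the blow-up of the norm of $i_M(p)$ as $p$ approaches $LC$, combined with compactness of $S$, with item (1) proved by exactly the same subsequence argument. It is in fact more careful than the paper at two points: you keep all estimates in the Euclidean norm that defines the topology (the paper's final bound $\alpha\beta$ in item (3) is phrased with the Minkowski norm, which by itself does not give Euclidean boundedness), and you justify the existence of a sequence in $S\setminus LC$ converging to a point of $S\cap LC$ --- via a boundary point of $S\cap LC$ relative to $S$ and the observation that no component of a compact regular surface can lie entirely inside $LC$ --- a step the paper simply asserts, even though it is genuinely needed since $S\cap LC$ can have nonempty interior in $S$ for non-analytic surfaces.
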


\begin{proof}
	(1) Let $A = \left \{ \frac{p}{\langle p, p \rangle} : p \in S \setminus LC \right \}$ and $\{q_i\}_{i \in \mathbb{N}}$ a sequence in $A$ that converges to some $q \in \R^3_1$, with $q_i = p_i/\langle p_i,p_i \rangle$ and $p_i \in S \setminus LC$, for all $i \in \mathbb{N}$.
	Since $S$ is compact, there is a subsequence $\{p_{i_k}\}$ of $\{p_i\}$ that converges to some $p \in S$. If $p \in LC$, then $\| p_{i_k} \| \rightarrow 0$ and $\| q_{i_k} \| \rightarrow \infty$, which contradicts the convergence of $\{q_i\}$. Thus, $p \in S \setminus LC$ and
	$$q = \lim_{k \rightarrow \infty} q_{i_k} = \lim_{k \rightarrow \infty} \frac{p_{i_k}}{\langle p_{i_k},p_{i_k} \rangle} = \frac{p}{\langle p,p \rangle} \in A.$$
	Therefore, $A$ is closed and $i_M(S) = cl(A) = A$.\vspace{0.2cm}
		
	\noindent (2) Suppose that $q \in i_M(S) \cap LC$. It follows from (1) above that there exists $p \in S \setminus LC$ such that $q = i_M(p)$. Then
	$$0 = \langle q,q \rangle = \left \langle \frac{p}{\langle p,p \rangle},\frac{p}{\langle p,p \rangle} \right \rangle = \frac{1}{\langle p,p \rangle},$$
	which is not possible since $\langle p,p \rangle < \infty$. Therefore, $i_M(S)$ does not intersect $LC$.\vspace{0.2cm}
		
	\noindent (3) If $S \cap LC \neq \emptyset$, then there is a sequence $\{ p_i \}_{i \in \mathbb{N}}$ in $S \setminus LC$ and $p \in S \cap LC $ with $p_i \to p$. Then
	$$\lim_{i \rightarrow \infty} \| p_i \| = \| p \| = 0 \Rightarrow \lim_{i \rightarrow \infty} \| i_M(p_i) \| = \infty,$$
	so $i_M(S)$ is unbounded. For the converse, if $S \cap LC = \emptyset$, then
	$$\alpha = \sup \left\{ \frac{1}{|\langle p,p \rangle|}, p \in S \right\} < \infty \quad {\rm and} \quad \beta = \sup \{ \| p \| , p \in S \} < \infty.$$
	Therefore, $i_M(S)$ is contained in the Euclidean sphere with center the origin and radius $\alpha \beta$.
\end{proof}


An important property of the Möbius inversion in the Euclidean space $\R^3$ is that it preserves the lines of principal curvature of surfaces in $\R^3$, that is, if $C$ is a line of principal curvature of a surface $S \subset \R^3$, then $i_M(C)$ is a line of principal curvature of the surface $i_M(S)$. That property also hold for Möbius inversion in $\R^3_1$.

\begin{theo}\label{teo:dp}
	The lines of principal curvature of surfaces in $\R^3_1$ are preserved by the Möbius inversion.
\end{theo}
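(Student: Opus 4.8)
The plan is to work locally and reduce everything to one structural fact about $i_M$. Fix a parametrization $\bx\colon U\to\R^3_1$ of $S$ with image in $S\setminus LC$, write $\rho=\langle\bx,\bx\rangle$ (so $\rho\neq0$ on $U$), and take $\tilde\bx=i_M\circ\bx=\bx/\rho$ as a parametrization of $i_M(S)$. Since $\rho_u=2\langle\bx,\bx_u\rangle$, a short computation gives
\[
\tilde\bx_u=\frac1\rho\Big(\bx_u-\frac{2\langle\bx,\bx_u\rangle}{\rho}\,\bx\Big)=\frac1\rho\,R_\bx(\bx_u),
\qquad
R_\bx(\bv):=\bv-2\,\frac{\langle\bx,\bv\rangle}{\langle\bx,\bx\rangle}\,\bx,
\]
where $R_\bx$ is the Minkowski reflection across the plane $\bx^\perp$. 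Because $\bx\notin LC$, $R_\bx$ is a genuine linear isometry of $\langle\,,\rangle$: it is self-adjoint, involutive, satisfies $R_\bx(\bx)=-\bx$, and has $\det R_\bx=-1$. Thus $d(i_M)_\bx=\rho^{-1}R_\bx$, so $i_M$ is conformal with (positive) factor $\rho^{-2}$, and in particular $\tilde E=E/\rho^2$, $\tilde F=F/\rho^2$, $\tilde G=G/\rho^2$.

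Next I would transfer the second-order data. Rather than use the unit normal $\bN$, which is undefined on $LD$, I would work throughout with $\bx_u\times\bx_v$ and with the homogeneous equation (\ref{eq:principalLD}), so that the argument also covers $LD$. For any metric isometry one has $R_\bx(\ba)\times R_\bx(\bb)=(\det R_\bx)\,R_\bx(\ba\times\bb)=-R_\bx(\ba\times\bb)$, whence
\[
\tilde\bx_u\times\tilde\bx_v=-\frac1{\rho^2}\,R_\bx(\bx_u\times\bx_v).
\]
Differentiating $\tilde\bx_u,\tilde\bx_v$ once more and pairing with this vector, the self-adjointness of $R_\bx$ together with $R_\bx(\bx)=-\bx$, the relations $\langle\bx_u\times\bx_v,\bx_u\rangle=\langle\bx_u\times\bx_v,\bx_v\rangle=0$, and the identities $\langle\bx,\bx_{uu}\rangle=\partial_u\langle\bx,\bx_u\rangle-E$ (and their analogues) should collapse all lower-order terms. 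Writing $H=\langle\bx_u\times\bx_v,\bx\rangle=\det(\bx_u,\bx_v,\bx)$, I expect the clean relations
\[
\bar{\tilde l}=-\tfrac1{\rho^4}\big(\rho\bar l+2EH\big),\quad
\bar{\tilde m}=-\tfrac1{\rho^4}\big(\rho\bar m+2FH\big),\quad
\bar{\tilde n}=-\tfrac1{\rho^4}\big(\rho\bar n+2GH\big).
\]

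Finally I would substitute $\tilde E,\tilde F,\tilde G$ and $\bar{\tilde l},\bar{\tilde m},\bar{\tilde n}$ into the coefficients of (\ref{eq:principalLD}) for $i_M(S)$. The decisive point is that the $H$-terms cancel in every coefficient: for instance $\tilde G\bar{\tilde m}-\tilde F\bar{\tilde n}=-\rho^{-5}(G\bar m-F\bar n)$, and the $dudv$- and $du^2$-coefficients likewise acquire the single common factor $-\rho^{-5}$. Hence the BDE of $i_M(S)$ equals $-\rho^{-5}$ times the BDE of $S$; since $-\rho^{-5}\neq0$ on $U$, the two equations have identical solution curves in $U$. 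As $\tilde\bx(u(t),v(t))=i_M(\bx(u(t),v(t)))$, this says precisely that $i_M$ carries lines of principal curvature of $S$ to lines of principal curvature of $i_M(S)$. Moreover, since all three coefficients are scaled by the same factor, the discriminant $\tilde\delta$ is multiplied by $\rho^{-10}$, so its zero set — the $LPL$ — is preserved as well.

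The main obstacle, and the genuine difference from the Euclidean case, is the locus of degeneracy: on $LD$ the Gauss map $\bN$ is undefined, so an argument based purely on conformality plus Rodrigues' equation does not directly apply there. Carrying out all the second-order bookkeeping with $\bx_u\times\bx_v$ and the homogeneous equation (\ref{eq:principalLD}) in place of $\bN$ is exactly what removes this obstacle, because every quantity involved ($\tilde\bx_u\times\tilde\bx_v$, $\bar{\tilde l},\bar{\tilde m},\bar{\tilde n}$, and the factor $-\rho^{-5}$) stays regular across $LD$.
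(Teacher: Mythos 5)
Your proposal is correct and follows essentially the same route as the paper: compute the transformed first fundamental form coefficients and the homogeneous second-order coefficients $\bar l_M,\bar m_M,\bar n_M$ of $i_M(S)$, substitute them into the BDE (\ref{eq:principalLD}), and observe that all three coefficients acquire a single common nonzero factor because the terms involving $H=\langle \bx_u\times\bx_v,\bx\rangle$ cancel. The only difference is presentational: you derive the formulas for the transformed coefficients conceptually via the reflection operator $R_\bx$ and the isometry identity for the Minkowski cross product (the paper simply states them, with a sign convention that differs from yours but is immaterial, since the extra terms cancel in the BDE coefficients regardless of that sign), which also makes the validity of the argument across the $LD$ explicit.
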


\begin{proof}
	Let $S \subset \R^3_1$ be a surface and $\varphi : U \subset \R^2 \rightarrow \R^3_1$ be a local parameterization of $S$. Then $\tilde \varphi = i_M \circ \varphi$ is a local parameterization of $S_M = i_M(S)$, where $E$, $F$ and $G$ are the coefficients of the first fundamental form of $S$ and $E_M$, $F_M$ and $G_M$ are those of $S_M$. It follows that
	\begin{equation}\label{eq:pff}
	E_M = \frac{E}{\langle \varphi, \varphi \rangle^2}, \quad F_M = \frac{F}{\langle \varphi, \varphi \rangle^2}, \quad G_M = \frac{G}{\langle \varphi, \varphi \rangle^2}.
	\end{equation}
	The coefficients $\bar{l}_M$, $\bar{m}_M$ and $\bar{n}_M$ of the second fundamental form of $S_M$ with denominator removed are expressed as follows in terms of the coefficients $\bar{l }$, $\bar{m}$ and $\bar{n}$ of $S$:
	$$\bar{l}_M = \frac{1}{\langle \varphi, \varphi \rangle^3} (\bar{l}+\alpha E), \qquad \bar{m}_M = \frac{1}{\langle \varphi, \varphi \rangle^3} (\bar{m}+\alpha F), \qquad \bar{n}_M = \frac{1}{\langle \varphi, \varphi \rangle^3} (\bar{n}+\alpha G),$$
	with $\alpha = -2 \langle N, \tilde \varphi \rangle$.
	
	The equation of the lines of principal curvature of $S_M$ is given by
	\begin{equation}\label{eq:linha_curv_mobius}
	(\bar{m}_M E_M - \bar{l}_M F_M) dx^2+(\bar{n}_M E_M-\bar{l}_M G_M)dxdy+(\bar{n}_M F_M-\bar{m}_M G_M) dy^2 = 0.    
	\end{equation}
	Substituting the coefficients $E_M$, $F_M$, $G_M$, $\bar l_M$, $\bar m_M$ and $\bar n_M$ by their expressions in terms of $E$, $F$, $G$, $\bar l$, $\bar m$ and $\bar n$, it follows that (\ref{eq:linha_curv_mobius}) is satisfied if and only if
	$$(\bar f E - \bar e F) dx^2+(\bar g E-\bar e G)dxdy+(\bar g F- \bar f G) dy^2 = 0,$$
	which is precisely the equation of the lines of principal curvatures of $S$.
\end{proof}

\begin{cor}\label{cor:dp}
	The LPL and the LD of a surface in the Minkowski 3-space are preserved by the Möbius inversion.
\end{cor}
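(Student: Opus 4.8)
The plan is to treat both the $LD$ and the $LPL$ as zero loci of explicit ``discriminant'' functions, namely $\delta = F^2 - EG$ for the $LD$ and $\tilde\delta = (E\bar n - G\bar l)^2 - 4(F\bar n - G\bar m)(E\bar m - F\bar l)$ for the $LPL$, and to show that under $i_M$ each of these functions is merely multiplied by a nowhere-vanishing factor. Since rescaling by a nonzero function does not move a zero set, this yields the result. Throughout I reuse the parametrization $\tilde\varphi = i_M \circ \varphi$ and the transformation formulas for $E_M,F_M,G_M$ and $\bar l_M,\bar m_M,\bar n_M$ already established in the proof of Theorem \ref{teo:dp}; for brevity I write $w = \langle \varphi,\varphi \rangle$, which is nonzero on the domain of $i_M$, i.e. off $LC$.

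For the $LD$ the computation is immediate. Using (\ref{eq:pff}),
\[
\delta_M = F_M^2 - E_M G_M = \frac{F^2}{w^4} - \frac{EG}{w^4} = \frac{\delta}{w^4}.
\]
Because $w \neq 0$ on $S \setminus LC$, we get $\delta_M(u,v) = 0$ if and only if $\delta(u,v) = 0$, so $i_M$ carries the $LD$ of $S$ bijectively onto the $LD$ of $S_M$.

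For the $LPL$ the essential point is already latent in the proof of Theorem \ref{teo:dp}: the three bilinear combinations of coefficients that appear in the principal curvature BDE (\ref{eq:principalLD}) all scale by one and the same factor. Substituting the formulas for $E_M,G_M,\bar l_M,\bar n_M$ and recalling $\alpha = -2\langle N,\tilde\varphi\rangle$, the $\alpha$-terms cancel because they enter multiplied by the \emph{symmetric} products $EG,FG,EF$:
\[
E_M \bar n_M - G_M \bar l_M = \frac{1}{w^5}\bigl[E(\bar n + \alpha G) - G(\bar l + \alpha E)\bigr] = \frac{E\bar n - G\bar l}{w^5},
\]
and the identical computation gives $F_M \bar n_M - G_M \bar m_M = (F\bar n - G\bar m)/w^5$ and $E_M \bar m_M - F_M \bar l_M = (E\bar m - F\bar l)/w^5$. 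Thus each coefficient of the BDE (\ref{eq:principalLD}) for $S_M$ equals $1/w^5$ times the corresponding coefficient for $S$.

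Finally, the $LPL$ is the discriminant locus of this BDE, and the discriminant $b^2 - 4ac$ of a quadratic in $du,dv$ scales by $\lambda^2$ when all three coefficients are multiplied by a common factor $\lambda$. Taking $\lambda = 1/w^5$ gives $\tilde\delta_M = \tilde\delta / w^{10}$, so $\tilde\delta_M = 0$ if and only if $\tilde\delta = 0$ and the $LPL$ is preserved. I expect the only delicate point to be the bookkeeping of the power of $w$ together with the verification that the $\alpha$-terms cancel in all three combinations; but since $\alpha$ always enters through an antisymmetric pairing that annihilates the symmetric products $EG,FG,EF$, no genuinely new difficulty arises beyond the computation carried out for Theorem \ref{teo:dp}.
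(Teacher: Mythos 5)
Your proposal is correct and follows essentially the same route as the paper: the $LD$ claim is exactly the paper's observation from (\ref{eq:pff}), and your explicit computation that the three BDE coefficients all scale by $1/w^5$ (hence $\tilde\delta_M = \tilde\delta/w^{10}$) is just an unpacked version of the paper's one-line appeal to Theorem \ref{teo:dp}, whose proof already contains that common-factor scaling. Your version has the minor virtue of making the discriminant bookkeeping and the cancellation of the $\alpha$-terms explicit, but no new idea is involved.
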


\begin{proof}
	The proof for the preservation of the LPL is consequence of the Theorem \ref{teo:dp}. As for the LD, observe from (\ref{eq:pff}) that $F_M^2-E_M G_M = 0$ if and only if $F^2-EG=0$.
\end{proof}

\section{Möbius inversion and the parabolic set}\label{sec:parabolic}

Let $p_0 = (a,b,c) \in \R^3_1$ and 
$$S(p_0,r) = \{ p \in \R^3_1 : \langle p-p_0,p-p_0 \rangle_E = r^2 \},$$
so $S(p_0,r)$ is the Euclidean sphere with center $p_0$ and radius $r > 0$ considered as a set of points in $\R^3_1$. Denote $S_M(p_0,r) = i_M(S(p_0,r))$.

\begin{theo}\label{teo:closed_surface}
	The surface $S_M(p_0,r) = i_M(S(p_0,r))$ is homeomorfic to $S(p_0,r)$ if and only if $(\sqrt{a^2+b^2} - \vert c \vert)^2>2r^2$.
\end{theo}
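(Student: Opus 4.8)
The plan is to reduce the homeomorphism question to whether the sphere meets the light cone, and then to convert that incidence condition into the stated inequality by a Euclidean distance computation.

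First I would note that everything hinges on whether $S(p_0,r) \cap LC = \emptyset$. By the discussion following (\ref{eq:Ri}), the set $\R^3_1 \setminus LC$ is the disjoint union of the three connected regions $R_1$, $R_2$, $R_3$, and $i_M$ restricts to a diffeomorphism (onto its image) on each of them. If $S(p_0,r)$ avoids $LC$, then, being connected, it is contained in a single $R_i$; since by part (1) of Theorem \ref{prop:inv_compac} the set $S_M(p_0,r)$ equals the pointwise image $i_M(S(p_0,r))$, the restriction of the diffeomorphism $i_M$ furnishes a homeomorphism $S(p_0,r)\to S_M(p_0,r)$. Conversely, if $S(p_0,r)$ meets $LC$, then part (3) of Theorem \ref{prop:inv_compac} shows $S_M(p_0,r)$ is unbounded; being also closed by part (1), it is non-compact and hence cannot be homeomorphic to the compact sphere $S(p_0,r)$.

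It then remains to prove the equivalence $S(p_0,r)\cap LC = \emptyset \iff (\sqrt{a^2+b^2}-|c|)^2 > 2r^2$. Since $LC$ is connected and unbounded, the sphere avoids it exactly when the Euclidean distance from the center $p_0 = (a,b,c)$ to $LC = \{x^2+y^2=z^2\}$ exceeds $r$. To compute this distance I would exploit the rotational symmetry of $LC$ about the $z$-axis to reduce to a center of the form $(\rho,0,c)$ with $\rho=\sqrt{a^2+b^2}$, and minimize $(x-\rho)^2+y^2+(z-c)^2$ over the cone. The angular minimization forces $y=0$, reducing the problem to the squared distance from $(\rho,c)$ to the two lines $x=\pm z$ in the plane; a short computation yields the minimal value $(\sqrt{a^2+b^2}-|c|)^2/2$, and the inequality in the statement is precisely the requirement that this exceed $r^2$.

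The main obstacle is this distance computation, which is routine but error-prone: one must handle the two nappes of the cone together with the sign of $c$, and verify that in every case the nearest point of $LC$ produces the common value $(\sqrt{a^2+b^2}-|c|)^2/2$, while the vertex of the cone is never the minimizer. Once this value is in hand, the equivalence, and hence the theorem, follow at once.
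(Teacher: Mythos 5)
Your proposal is correct and follows essentially the same route as the paper: both reduce the homeomorphism question, via Theorem \ref{prop:inv_compac}, to the condition $S(p_0,r)\cap LC=\emptyset$, and then convert that condition into the stated inequality by computing the Euclidean distance $d(p_0,LC)=\left|\sqrt{a^2+b^2}-|c|\right|/\sqrt{2}$. The only difference is cosmetic: the paper obtains this distance by Lagrange multipliers, whereas you use the rotational symmetry of $LC$ to reduce to a planar computation, and you are in fact slightly more careful than the paper in arguing (via connectedness and unboundedness of $LC$, and via unboundedness of $S_M(p_0,r)$ in the intersecting case) that non-intersection is equivalent to, not merely implied by, $d(p_0,LC)>r$.
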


\begin{proof}
	From item (3) of Theorem \ref{prop:inv_compac} and from the fact that $i_M : \R^3_1 \setminus LC \to \R^3_1 \setminus LC$ is a diffeomorphism, it follows that $S_M(p_0,r)$ is homeomorfic to $S(p_0,r)$ if and only if $S(p_0,r) \cap LC = \emptyset$. Note that $S(p_0,r) \cap LC = \emptyset$ when $d(p_0,LC) > r$, where $d(p_0,LC)$ is the Euclidean distance between $p_0$ and $LC$. 
	
	We need to obtain the minimum value of the function $$f(x,y,z) = (x-a)^2+(y-b)^2+(z-c)^2$$ restricted to the light cone $x^2+y^2=z^2$. Using the Lagrange Multipliers method, we find the minimum point and it follows that
	$$d(p_0,LC) = \frac{\sqrt{(\sqrt{a^2+b^2} - \vert c \vert)^2}}{\sqrt{2}}.$$
	Therefore, $S(p_0,r) \cap LC = \emptyset$ if and only if $(\sqrt{a^2+b^2} - \vert c \vert)^2 > 2r^2$.
\end{proof}

The surface $S_M(p_0,r)$ with $p_0=(2,0,0)$ and $r=1$ is closed (ie, compact without boundary) by Theorem \ref{teo:closed_surface} because the point $\left( \frac{3}{4},\frac{\sqrt{15}}{12},0 \right) \in S_M(p_0,r)$, for example, belongs to the parabolic set, see Figure \ref{fig:inv_esfera}. Thus, unlike the lines of principal curvature, the $LD$ and the $LPL$, the parabolic set is not preserved by the Möbius inversion. We have the following conditions on $p_0$ and $r$ for $S_M(p_0,r)$ to be convex.
\begin{figure} [h!]
	\centering
	\includegraphics[scale=0.45]{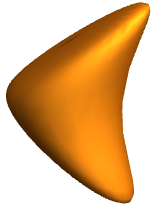}
	\caption{The surface $S_M(p_0,r)$ with $p_0=(2,0,0)$ and $r=1$.}
	\label{fig:inv_esfera}
\end{figure}

\begin{theo}\label{teo:inv_cp}
	The parabolic set of $S_M(p_0,r)$ is empty if and only if $\sqrt{a^2+b^2} > 2 r + \sqrt{c^2 + r^2}$ or $|c| > 2 r + \sqrt{a^2 + b^2 + r^2}$.
\end{theo}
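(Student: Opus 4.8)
The plan is to express the Gaussian‑curvature numerator of $S_M(p_0,r)$ through the transformation formulas already established for the Möbius inversion, and then to decide, by a symmetry reduction followed by a constrained optimization, for which $(p_0,r)$ this numerator has no zero on the sphere. Concretely, I would start from the formulas in the proof of Theorem \ref{teo:dp}. Writing $Q=\langle\varphi,\varphi\rangle$ and substituting $\bar l_M=Q^{-3}(\bar l+\alpha E)$, $\bar m_M=Q^{-3}(\bar m+\alpha F)$, $\bar n_M=Q^{-3}(\bar n+\alpha G)$, with $\alpha=-2\langle N,\tilde\varphi\rangle$, one obtains
\[
Q^6\,\bar K_M=(EG-F^2)\,\alpha^2+(\bar lG+\bar nE-2\bar mF)\,\alpha+(\bar l\bar n-\bar m^2),
\]
a quadratic in $\alpha$. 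Since $Q\neq0$ off the light cone, the parabolic set of $S_M(p_0,r)$, which is the zero locus of $\bar K_M=\bar l_M\bar n_M-\bar m_M^2$, is empty if and only if the right‑hand side has no zero as $\varphi$ ranges over $S(p_0,r)$.

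Next I would exploit that $S(p_0,r)$ is a round Euclidean sphere to make the coefficients explicit. Because $\bar l,\bar m,\bar n$ are the determinants $\det[\varphi_u,\varphi_v,\varphi_{uu}]$, etc., they are metric independent and agree with the corresponding Euclidean quantities; writing $\nu=(\varphi-p_0)/r$ for the Euclidean unit normal, $J=\det[\varphi_u,\varphi_v,\nu]$, and $E_E,F_E,G_E$ for the Euclidean first fundamental form, the sphere relations give $\bar l=-\frac{J}{r}E_E$, $\bar m=-\frac{J}{r}F_E$, $\bar n=-\frac{J}{r}G_E$. In particular the constant term is $\bar l\bar n-\bar m^2=\frac{J^2}{r^2}(E_EG_E-F_E^2)>0$, so that parabolic points of $S_M(p_0,r)$ can only be produced by the $\alpha$‑terms. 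Relating the two metrics through $\langle\bu,\bv\rangle=\langle\bu,\bv\rangle_E-2u_2v_2$ then converts the displayed expression into an explicit function on the sphere.

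Before computing I would reduce the parameters by symmetry. The product $\langle\cdot,\cdot\rangle$, and hence $i_M$, is invariant under rotations fixing the $z$‑axis and under $(x,y,z)\mapsto(x,y,-z)$; these are Euclidean isometries, they carry spheres to spheres, and they commute with $i_M$, so they preserve the parabolic set. Applying them I may assume $b=0$ and $a,c\ge0$, and the target conditions become $a>2r+\sqrt{c^2+r^2}$ or $c>2r+\sqrt{a^2+r^2}$. With the parametrization $\varphi=p_0+r\omega$, $\omega\in\mathbb S^2$ the Euclidean unit sphere, the equation $Q^6\bar K_M=0$ collapses to a single scalar equation $\Psi_{a,c,r}(\omega)=0$, and the theorem becomes the statement that $\Psi_{a,c,r}$ has no zero on $\mathbb S^2$ exactly in the stated range. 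The threshold is then located by extremizing the relevant expression over $\omega$, a constrained optimization of the same flavour as the Lagrange‑multiplier computation in Theorem \ref{teo:closed_surface}; the two inequalities should correspond to the two ways the sphere can sit away from the light cone, deep in $R_1$ (spacelike center) or deep in $R_2\cup R_3$ (timelike center).

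The main obstacle is the middle step: reducing $Q^6\bar K_M=0$ to a manageable scalar condition $\Psi_{a,c,r}$ and proving that its threshold for having no zero is precisely $a=2r+\sqrt{c^2+r^2}$ (respectively $c=2r+\sqrt{a^2+r^2}$). The delicate point is that the leading coefficient $EG-F^2$ changes sign across the $LD$ (Lorentzian versus Riemannian regions) and $\alpha$ also changes sign, so one cannot simply demand that the quadratic in $\alpha$ be definite; instead one must track, region by region, where the actual value $\alpha(\varphi)$ meets a root of the quadratic, and verify that the first parabolic point appears exactly when the sphere reaches the critical configuration relative to the light cone. Handling the spacelike and timelike branches separately, and checking that no parabolic point is hidden on the $LD$ itself, is where I expect the real work to lie.
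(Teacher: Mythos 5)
Your setup is correct as far as it goes, and it is essentially the same route the paper takes: the paper also works with the numerator of the Gaussian curvature of $S_M(p_0,r)$ obtained from the inversion formulas of Theorem \ref{teo:dp}, restricted to the round sphere, and analyzes its zero set. Your quadratic-in-$\alpha$ identity
$Q^6\,\bar K_M=(EG-F^2)\,\alpha^2+(\bar lG+\bar nE-2\bar mF)\,\alpha+(\bar l\bar n-\bar m^2)$
is algebraically right, the positivity of the constant term $\bar l\bar n-\bar m^2$ on a sphere is right, and the symmetry reduction to $b=0$, $a,c\ge 0$ is legitimate (the paper achieves the same effect by keeping $h(u)=a\cos u+b\sin u$ and using only that its range is $\left[-\sqrt{a^2+b^2},\sqrt{a^2+b^2}\right]$).

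The genuine gap is that the proposal stops exactly where the content of the theorem begins, and you say so yourself: reducing $Q^6\bar K_M=0$ to an explicit condition and proving that its threshold is precisely the stated pair of inequalities is ``where I expect the real work to lie.'' That step is not a deferrable technicality; it \emph{is} the proof, in both directions of the equivalence. In the paper, the explicit computation in the spherical parameterization shows that $\bar K_M=0$ holds at $\tilde\varphi(u,v)$ if and only if $f(u,v)=0$ or $g(u,v)=0$, where
\begin{equation*}
f=-a^2-b^2+c^2+r^2+4cr\cos v+2r^2\cos^2 v,\qquad
g=-a^2-b^2+c^2+4cr\cos^3 v+3r^2\cos 2v-4rh(u)\sin^3 v,
\end{equation*}
and the inequalities $\sqrt{a^2+b^2}>2r+\sqrt{c^2+r^2}$, $|c|>2r+\sqrt{a^2+b^2+r^2}$ emerge from elementary analysis of these two factors: solving $f=0$ as a quadratic in $\cos v$, bounding $g$ between the one-variable functions $g_{\min}(v)$ and $g_{\max}(v)$ and computing their critical values, checking that the conditions forced by $g$ already imply those forced by $f$, and finally verifying the two poles $(0,0,\pm r)$ missed by the parameterization. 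None of this, nor any substitute for it, appears in your argument, so no inequality is ever derived and neither implication of the ``if and only if'' is established. I would also note that the difficulty you single out --- the sign change of $EG-F^2$ across the $LD$ and the need to track $\alpha$ region by region --- largely dissolves in the paper's approach: one works directly with the zero locus of the globally defined numerator $\bar l_M\bar n_M-\bar m_M^2$ (which extends across the $LD$ by construction, as recalled in Section \ref{sec:prel}), rather than with definiteness of the quadratic in $\alpha$; the real work is the explicit factorization and optimization above, which your plan does not carry out.
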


\begin{proof}
	Let $L = S(p_0,r) \cap LC$. Take a local parameterization $\varphi : U \to \R^3_1$ of $S(p_0,r)$ given by $\varphi (u,v) = \left( r\cos (u) \sin (v) + a, r\sin (u) \sin(v) + b, r\cos(v) +c \right)$, with $U = (\epsilon_0,\epsilon_0+2 \pi) \times (0,\pi)$ for some $\epsilon_0 > 0$ small enough. Then $\tilde \varphi : U \setminus L \to S_M(p_0,r)$ is a local parameterization of $S_M(p_0,r)$ with $\tilde \varphi(u,v) = i_M(\varphi(u,v))$. Calculating the Gaussian curvature of $S_M(p_0,r)$, the point $\tilde \varphi(u,v)$ belongs to the parabolic set of $S_M(p_0,r)$ if and only if $f(u,v) = 0$ or $g(u,v) = 0$, with
	$$\begin{array}{ccl}
	f(u,v) & = & - a^2 - b^2 + c^2 + r^2 +4cr \cos (v) + 2 r^2 \cos(v)^2, \vspace{0.2cm}\\
	g(u,v) & = & -a^2-b^2+c^2+4cr \cos(v)^3 + 3 r^2 \cos (2v) - 4r h(u) \sin(v)^3,
	\end{array}$$
	and $h(u) = a \cos (u)+b \sin (u)$.
	
	Note that $f(u,v) = 0$ if and only if
	\begin{equation}\label{eq:cos_v}
	\cos(v) = \frac{-2 c\pm\sqrt{2} \sqrt{a^2+b^2+c^2-r^2}}{2 r}.
	\end{equation}
	Equation (\ref{eq:cos_v}) has no solutions when $a^2+b^2+c^2<r^2$, $|c| < -2r+\sqrt{a^2+b^2+r^2}$ or $|c| > 2 r+\sqrt{a^2+b^2+r^2}$.
	
	
	The image of the function $h$ is the closed interval $\left[ -\sqrt{a^2+b^2},\sqrt{a^2+b^2} \right]$, so $g_{min}(v) \leq g(u,v) \leq g_{max}(v)$ for all $(u,v)$ in $U$, with
	$$\begin{array}{ccl}
		g_{min}(v) = -a^2-b^2+c^2+4cr \cos(v)^3 + 3 r^2 \cos (2v) - 4r \sqrt{a^2+b^2} \sin(v)^3, \vspace{0.2cm}\\
		g_{max}(v) = -a^2-b^2+c^2+4cr \cos(v)^3 + 3 r^2 \cos (2v) + 4r \sqrt{a^2+b^2} \sin(v)^3.
	\end{array}$$
	Computing the critical values of $g_{min}$ in $[0,\pi]$, we get that $g_{min} > 0$ when $|c| > 2 r + \sqrt{a^2+b^2+r^2}$. Likewise, $g_{max} < 0$ when $\sqrt{a^2+b^2} > 2 r + \sqrt{c^2 + r^2}$. Hence, $g(u,v) = 0$ has no solutions if and only if $\sqrt{a^2+b^2} > 2 r + \sqrt{c^2 + r^2}$ or $|c| > 2 r + \sqrt{a^2 + b^2 + r^2}$.
	
	The parameterization $\varphi$ does not include the points $(0,0,\pm r) \in S(p_0,r)$. However, using other parameterization of $S(p_0,r)$, it follows that $i_M(0,0,\pm r)$ does not belong to the parabolic set of $S_M(p_0,r)$ when $\sqrt{a^2+b^2} > 2 r + \sqrt{c^2 + r ^2}$ or \linebreak $|c| > 2 r + \sqrt{a^2 + b^2 + r^2}$.
\end{proof}

A surface $S$ has a contact $A_1^+$ with $T_pS$ at $p \in S$ when the height function $h$ has a Morse singularity of index 0 or 2 at $p$. A closed surface $S$ is an \textit{ovaloid} if it is strictly convex, that is,it has a contact $A_1^+$ with the tangent plane $T_pS$, for all $p \in S$. Note that the definition of ovaloid is independent of the metric (Euclidean or Lorentzian) in $\R^3$. For example, the Euclidean spheres are ovaloids.

\begin{rem}\label{teo:ovaloide}
	A closed surface is an ovaloid if and only if its parabolic set is empty. Indeed, an ovaloid consists of eliptic points.
\end{rem}

%
%

\begin{cor}\label{cor:mSac_ov}
	$S_M(p_0,r)$ is an ovaloid if and only if $\sqrt{a^2+b^2} > 2 r + \sqrt{c^2 + r^2}$ or $|c| > 2 r + \sqrt{a^2 + b^2 + r^2}$.
\end{cor}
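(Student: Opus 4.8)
The plan is to deduce the corollary from Theorem \ref{teo:inv_cp} together with Remark \ref{teo:ovaloide}, being careful that the notion of ovaloid presupposes that the surface is closed. By Remark \ref{teo:ovaloide}, $S_M(p_0,r)$ is an ovaloid precisely when it is a closed surface whose parabolic set is empty. Theorem \ref{teo:inv_cp} already identifies the vanishing of the parabolic set with the stated disjunction of inequalities, so the only additional point to establish is that either of these inequalities forces $S_M(p_0,r)$ to be closed; once that is in place, the biconditional follows formally.

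To secure closedness, recall from Theorem \ref{teo:closed_surface} that $S_M(p_0,r)$ is homeomorphic to the sphere $S(p_0,r)$, and hence compact without boundary, exactly when $(\sqrt{a^2+b^2}-\vert c\vert)^2 > 2r^2$. Writing $A = \sqrt{a^2+b^2}$ and $C = \vert c\vert$, I would check that each of the two conditions of Theorem \ref{teo:inv_cp} implies this inequality. If $A > 2r + \sqrt{C^2+r^2}$, then since $\sqrt{C^2+r^2} \geq C$ we obtain $A - C > 2r > \sqrt{2}\,r$, so $(A-C)^2 > 2r^2$; the case $C > 2r + \sqrt{A^2+r^2}$ is symmetric and yields $C - A > \sqrt{2}\,r$. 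In either case $(A-C)^2 > 2r^2$, so $S_M(p_0,r)$ is closed.

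With closedness guaranteed, the equivalence is immediate: if the stated condition holds, then $S_M(p_0,r)$ is a closed surface with empty parabolic set, hence an ovaloid by Remark \ref{teo:ovaloide}; conversely, an ovaloid is in particular a closed surface with empty parabolic set, so Theorem \ref{teo:inv_cp} returns the stated condition. The main, and essentially only, obstacle here is conceptual rather than computational: one must notice that Theorem \ref{teo:inv_cp} by itself does not suffice, because emptiness of the parabolic set characterizes ovaloids only among surfaces already known to be closed. Once this is observed, the supporting estimate comparing the two inequalities is the short one-line argument above, resting only on $\sqrt{t^2+r^2}\geq\vert t\vert$ and $2 > \sqrt{2}$.
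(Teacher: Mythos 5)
Your proof is correct and takes essentially the same route as the paper, whose own proof simply cites Theorem \ref{teo:closed_surface}, Theorem \ref{teo:inv_cp} and Remark \ref{teo:ovaloide} without further detail. Your explicit check that either inequality of Theorem \ref{teo:inv_cp} forces the closedness condition $(\sqrt{a^2+b^2}-\vert c\vert)^2 > 2r^2$ of Theorem \ref{teo:closed_surface} is exactly the step the paper leaves implicit, and it is carried out correctly.
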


\begin{proof}
	The proof follows from the Theorems \ref{teo:closed_surface}, \ref{teo:inv_cp} and Remark \ref{teo:ovaloide}.
\end{proof}

Given an ovaloid $S$, let $N_E : S \to \mathbb{S}^2$ be the Gauss map of $S$ viewed as a surface on Euclidean 3-space and take $N_E$ pointing to the interior of $S$. For $p \in S$ and $r > 0$, define $S_p(r)$ to be the Euclidean sphere with center $p + r N_E(p)$ and radius $r$. Thus, $S_p(r)$ and $S$ are tangent at $p$ for any $r > 0$.

\begin{lem}\label{prop:R}
	Let $S \subset \R^3_1$ be an ovaloid. Then there exist $R > 0$ such that $S \setminus \{p\}$ is contained inside $S_p(R)$, for all $p \in S$.
\end{lem}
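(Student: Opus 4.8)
The plan is to reduce the containment $S\setminus\{p\}\subset S_p(R)$ to a single scalar inequality and then to show that the resulting quantity is bounded above uniformly over $S$ by a compactness argument; note that this is really a statement of Euclidean convex geometry, since both $N_E$ and the spheres $S_p(r)$ are Euclidean. Because $N_E(p)$ is a Euclidean unit vector, a point $q$ lies in the ball bounded by $S_p(R)$ exactly when $\|q-(p+R\,N_E(p))\|_E^2\le R^2$, which after expanding and cancelling $R^2$ becomes
\[
\|q-p\|_E^2 \ \le\ 2R\,\langle q-p,\,N_E(p)\rangle_E .
\]
Since $S$ is an ovaloid it bounds a strictly convex body and its tangent plane at $p$ is a support plane meeting $S$ only at $p$; as $N_E$ points into the interior, $\langle q-p,N_E(p)\rangle_E>0$ for every $q\in S\setminus\{p\}$. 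It therefore suffices to prove that
\[
\rho(p,q):=\frac{\|q-p\|_E^2}{2\,\langle q-p,N_E(p)\rangle_E}
\]
is bounded on $\{(p,q)\in S\times S:\ p\neq q\}$. Any $R$ strictly larger than $\sup\rho$ then makes the displayed inequality strict (the denominator being positive) and places $S\setminus\{p\}$ in the interior of $S_p(R)$ for all $p$ simultaneously.

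To bound $\rho$ I would split the domain according to the distance $\|q-p\|_E$. Away from the diagonal, on the compact set $K_\delta=\{(p,q)\in S\times S:\ \|q-p\|_E\ge \delta\}$, the function $\rho$ is continuous and the denominator attains a strictly positive minimum, so $\rho$ is bounded on $K_\delta$. The content lies near the diagonal, where numerator and denominator vanish together. Here I would use that the Euclidean principal curvatures of the ovaloid are continuous and strictly positive on the compact surface $S$, hence bounded below by some $\kappa_0>0$. Writing $S$ locally as a graph over $T_pS$ with $q-p=w+h(w)\,N_E(p)$, one has $\langle q-p,N_E(p)\rangle_E=h(w)=\frac12\,\mathrm{II}_p(w,w)+o(\|w\|_E^2)$ with $\mathrm{II}_p(w,w)\ge \kappa_0\|w\|_E^2$, while $\|q-p\|_E^2=\|w\|_E^2+h(w)^2=\|w\|_E^2(1+o(1))$. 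A Taylor estimate that is uniform in $p$ (by compactness and smoothness of $S$) then yields constants $\delta>0$ and $c>0$ with
\[
\langle q-p,N_E(p)\rangle_E\ \ge\ c\,\|q-p\|_E^2 \qquad \mbox{whenever } \ \|q-p\|_E<\delta ,
\]
so that $\rho(p,q)\le \frac1{2c}$ on this neighbourhood of the diagonal. Combining the two regions gives $R:=\sup\rho<\infty$, which proves the lemma.

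The main obstacle is making the near-diagonal estimate genuinely uniform in $p$. For a single fixed $p$ the quadratic lower bound on $\langle q-p,N_E(p)\rangle_E$ is immediate from the definiteness of $\mathrm{II}_p$, but extracting one pair $(\delta,c)$ valid for every $p\in S$ requires controlling the Taylor remainder uniformly. I expect to achieve this by covering $S$ with finitely many charts and invoking the continuity of the second fundamental form together with the uniform bound $\kappa_0$, so that the radius on which the quadratic term dominates can be chosen independently of $p$. Alternatively, the uniform bound can be packaged as a blow-up argument: were no finite $R$ to exist, there would be sequences $p_n,q_n\in S$ with $\rho(p_n,q_n)\to\infty$; compactness of $S\times S$ forces $p_n,q_n\to p_0$ to a common limit (distinct limits being excluded because the denominator stays bounded away from $0$ there by strict convexity), and rescaling $q_n-p_n$ by its Euclidean length contradicts $\mathrm{II}_{p_0}(w,w)\ge\kappa_0\|w\|_E^2$.
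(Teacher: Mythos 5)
Your proposal is correct, and its opening reduction coincides with the paper's: the paper's identity (\ref{eq:dist_r}) is a law-of-cosines computation showing that $q$ lies inside $S_p(r)$ precisely when $2r\cos\theta(q) > d(p,q)$, where $\theta(q)$ is the angle between $q-p$ and $N_E(p)$; since $d(p,q)/\cos\theta(q) = \|q-p\|_E^2/\langle q-p,N_E(p)\rangle_E = 2\rho(p,q)$, this is exactly your scalar inequality in polar form. Both you and the paper also obtain positivity of the denominator the same way, from the $A_1^+$ contact together with convexity (i.e.\ $S \cap T_pS = \{p\}$, so $\theta(q) < \pi/2$).

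The divergence is in how the ratio is bounded, and here your version is the one that actually works. The paper bounds the two factors separately, setting $d_{max} = \sup_q d(p,q) < \infty$ and $c = \inf_q \cos\theta(q)$, and asserts $c>0$ solely because $\theta(q) \in [0,\pi/2)$. But that infimum runs over the non-compact set $S\setminus\{p\}$, and in fact $\cos\theta(q) \to 0$ as $q \to p$: the chord $q-p$ becomes tangent to $S$, since $\langle q-p, N_E(p)\rangle_E$ is quadratic in $\|q-p\|_E$ while $\|q-p\|_E$ is only linear. So the paper's constant $c$ is actually $0$, and no bound of the form $d_{max}^2/2c$ is available; a finite $R$ exists only because numerator and denominator of $\rho$ vanish together at comparable quadratic rates near the diagonal. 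That is precisely the content of your estimate $\langle q-p,N_E(p)\rangle_E \ge c\,\|q-p\|_E^2$ via the uniform positive lower bound $\kappa_0$ on the principal curvatures, so your two-region split (compactness off the diagonal, curvature estimate near it) is not merely an alternative route --- it supplies the argument the paper's proof is missing. The uniformity in $p$ that you flag as the main obstacle is genuine but standard; either of your completions closes it (for the blow-up version, represent $S$ near the limit point $p_0$ as a graph over $T_{p_0}S$ and apply Taylor's formula with integral remainder at the varying base point, which yields the quadratic lower bound for all $p,q$ near $p_0$, not just $p = p_0$). Note finally that your single supremum of $\rho$ over $\{(p,q): p \neq q\}$ also disposes of the paper's last step, where finiteness of $\sup_p R(p)$ is again asserted from compactness of $S$ without an argument that $R(p)$ is, say, upper semicontinuous; in your formulation no such issue arises.
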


\begin{proof}
	Let $p$ be a point in $S$ and $d$ the Euclidean distance in $\R^3$. Denote by $\theta (q) \in [0,\pi]$ the angle between the vectors $q-p$ and $N_E(p)$, for all $q \in S \setminus \{p\}$.
		
	Since the contact between $S$ and $T_pS$ in $p$ is $A_1^+$, there is a neighborhood $U$ of $p$ in $\R^3$ such that, in $U$, no point of the interior of $S$ belongs to $T_pS$ (considering $T_pS$ as the plane that passes through $p$). Suppose some $q \in S \setminus \{p\}$ belongs to $T_pS$. For sufficiently small $\epsilon>0$, the point $(1-\epsilon) p + \epsilon q$ belong to $U$ and $T_pS$, since the segment $[p,q]$ is contained in $ T_pS$. Therefore, $[p,q]$ is not contained within $S$, which contradicts the convexity of $S$. Therefore, $S \cap T_pS = \{p\}$ and $\theta (q) \in [0,\pi/2)$ for all $q \in S \setminus \{p\}$.
	
	Given $q \in S \setminus \{p\}$, let $q'$ be the orthogonal projection of $q$ to the line passing through $p$ with direction $N_E(p)$. We have
	\begin{equation}\label{eq:dist_r}
		\begin{array}{ccl}
			d(q,p+rN_E(p))^2 & = & d(q,q')^2+d(q',p+rN_E(p))^2\\
			& = & d(q,q')^2+\left(d(p,p+rN_E(p))-d(p,q')\right)^2\\
			& = & r^2-2 d(q) r \cos (\theta (q))+d(q)^2.
		\end{array}
	\end{equation}
	If $2r > d(q)^2/\cos (\theta (q))$, it follows from (\ref{eq:dist_r}) that $d(q,p+rN_E(p))< r$. Since $S$ is compact,
	$$d_{max} = \sup \{d(p,q) : q \in S\} < \infty \quad {\rm and} \quad c = \inf \{ \cos (\theta(q)) : q \in S \} > 0$$
	because $\theta (q) \in [0,\pi/2)$. Let $R(p) = d_{max}^2/2c$. For all $r>R(p)$ the set $S \setminus \{p\}$ is inside $S_p(r)$. Finally, let $R = \sup \{R(p) : p \in S\}+1$, which is finite because $S$ is compact. Therefore, $S \setminus \{p\}$ is inside $S_p(R)$ for all $p$.
\end{proof}

\begin{theo}\label{teo:inv_ov}
	Given an ovaloid $S \subset \R^3_1$, there is a translation $T: \R^3_1 \to \R^3_1$ such that $i_M (T(S))$ is an ovaloid.
\end{theo}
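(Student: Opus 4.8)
The plan is to translate $S$ far from the light cone in a spacelike direction and then to force strict convexity of the image by combining the enclosing-sphere estimate of Lemma \ref{prop:R} with Corollary \ref{cor:mSac_ov}. Let $R > 0$ be the uniform radius furnished by Lemma \ref{prop:R}, so that for every $p \in S$ the surface $S \setminus \{p\}$ lies in the open Euclidean ball $B_p(R)$ bounded by $S_p(R)$. For $L > 0$ let $T = T_L$ denote translation by $(L,0,0)$. Since translations are Euclidean isometries, they preserve the Euclidean Gauss map $N_E$, the tangency of $S_p(R)$ to $S$ at $p$, and the inclusion $S \setminus \{p\} \subset B_p(R)$; consequently the enclosing sphere of $T(S)$ at $T(p)$ is exactly $T(S_p(R))$, a Euclidean sphere of radius $R$ whose center is the $(L,0,0)$-translate of $p + R N_E(p)$.

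First I would fix the translation length. As $p$ ranges over the compact surface $S$, the centers $p + R N_E(p)$ stay in a bounded set, so after translating by $(L,0,0)$ every such center $(a,b,c)$ has $a$ arbitrarily large while $b$ and $c$ remain bounded. Hence for $L$ large enough $T(S)$ lies in $R_1$ with $T(S) \cap LC = \emptyset$, and moreover every translated enclosing sphere $T(S_p(R))$ has center satisfying $\sqrt{a^2+b^2} > 2R + \sqrt{c^2 + R^2}$. By Theorem \ref{prop:inv_compac} the first fact makes $i_M(T(S))$ a closed surface, and by Corollary \ref{cor:mSac_ov} the second fact makes each $\Sigma_p := i_M(T(S_p(R)))$ an ovaloid. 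By Remark \ref{teo:ovaloide} it then suffices to show that $i_M(T(S))$ is strictly convex at every point.

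The heart of the argument is a comparison transported through the inversion. Fix $p \in S$ and set $\tilde p = i_M(T(p))$. Because the closed ball $\overline{B} = T(\overline{B_p(R)})$ lies in $R_1$ away from $LC$, and $i_M$ is a diffeomorphism of $R_1$, the image $i_M(\overline{B})$ is a compact set whose boundary is $\Sigma_p$; being a bounded connected open set with boundary the embedded sphere $\Sigma_p$, the set $i_M(T(B_p(R)))$ must be the interior of $\Sigma_p$. Therefore $i_M(T(S) \setminus \{T(p)\})$ lies inside $\Sigma_p$, while $i_M(T(S))$ is tangent to $\Sigma_p$ at $\tilde p$. Writing both surfaces near $\tilde p$ as graphs over the common tangent plane of $\Sigma_p$, the inclusion inside $\Sigma_p$ forces the graph of $i_M(T(S))$ to dominate that of $\Sigma_p$ at second order; since $\Sigma_p$ is strictly convex its quadratic part is positive definite, whence so is that of $i_M(T(S))$. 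Thus $i_M(T(S))$ has an $A_1^+$ contact with its tangent plane at $\tilde p$.

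The step I expect to be the main obstacle is precisely this last comparison: verifying cleanly that $i_M$ carries the interior of the translated enclosing ball onto the interior of $\Sigma_p$, and that lying inside a strictly convex surface while being tangent to it forces a positive-definite second fundamental form. Once this local convexity is secured for every $p \in S$, running it over the whole surface shows that the closed surface $i_M(T(S))$ consists entirely of elliptic points, so its parabolic set is empty; Remark \ref{teo:ovaloide} then identifies $i_M(T(S))$ as an ovaloid, completing the proof with $T = T_L$ for any sufficiently large $L$.
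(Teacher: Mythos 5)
Your proposal is correct and follows essentially the same route as the paper's proof: take the uniform enclosing radius $R$ from Lemma \ref{prop:R}, translate so that Corollary \ref{cor:mSac_ov} makes every inverted enclosing sphere $i_M(T(S_p(R)))$ an ovaloid, and transfer the $A_1^+$ contact from the inverted sphere to $i_M(T(S))$ via tangency plus the inside-inclusion. The only difference is that you make explicit two steps the paper merely asserts --- the choice of translation $(L,0,0)$ with $L$ large, and the Hessian-domination argument showing that tangency to a strictly convex surface from the inside forces an $A_1^+$ contact --- which strengthens rather than changes the argument.
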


\begin{proof}
	Take $R>0$ given by the Lemma \ref{prop:R}. From the Corollary \ref{cor:mSac_ov} it is possible to obtain a translation $T: \R^3_1 \to \R^3_1$ such that the Möbius inversion in $\R_1^3$ of $T(S_p(R))$ is an ovaloid for all $p \in S$.
	
	It follows from item (3) of Theorem \ref{prop:inv_compac} that $T(S_p(R))$ does not intersect $LC$ for all $p \in S$ and also $T(S)$ do not intersect $LC$. Thus, $i_M (T(S))$ is a closed surface.
	
	Given $p \in S$, $i_M$ is a diffeomorphism between the interior of $T(S_p(R))$ and the interior of $i_M(T(S_p(R)))$ because $T(S_p(R)) \cap LC = \emptyset$. So $i_M (T(S \setminus \{p\}))$ is contained inside $i_M(T(S_p(R)))$ and $i_M(T(S))$ is tangent to $i_M (T(S_p(R)))$ in $i_M(T(p))$. Since $i_M(T(S_p(R))) $ is an ovaloid, the contact between $i_M(T(S_p(R)))$ and its tangent plane at $i_M(T(p))$ is a $A_1^+$, then the contact of $i_M(T(S))$ and its tangent plane at $i_M(T(p))$ is also $A_1^+$. Therefore, $i_M(T(S))$ is an ovaloid.
\end{proof}

To exemplify the Theorem \ref{teo:inv_ov}, consider again the Euclidean sphere $S(p_0,r)$ with $p_0 = (2,0,0)$ and $r=1$, then $i_M(S( p_0,r))$ is not an ovaloid as noted earlier (see Figure \ref{fig:inv_esfera}). However, taking the translation $T(x,y,z) = (x,y,z)+(2,0,0)$, then $i_M (T(S(p_0,r)))$ is an ovaloid (see Figure \ref {fig:convex}).

\begin{figure} [h!]
	\centering
	\includegraphics[scale=0.6]{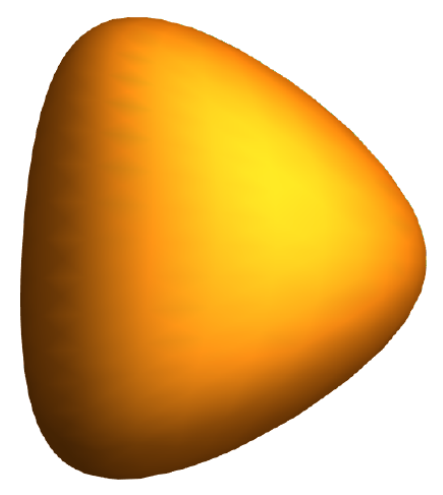}
	\caption{The surface $S_M(p_0,r)$ with $p_0=(2,0,0)$ and $r=1$.}
	\label{fig:convex}
\end{figure}

\begin{acknow}
This work was financed by the doctoral grant Coordenação de Aperfeiçoamento de Pessoal de Nível Superior – Brasil (CAPES) – Finance Code 001, and was supervised by Farid Tari.
\end{acknow}


\noindent
MACF: Instituto de Ci\^encias Matem\'aticas e de Computa\c{c}\~ao - USP, Avenida Trabalhador s\~ao-carlense, 400 - Centro, CEP: 13566-590 - S\~ao Carlos - SP, Brazil.\\
E-mail: marcoant\_couto@hotmail.com

\end{document}